%% file: CialdeaLeonessa.tex
\documentclass[graybox]{SNmult}

\usepackage{type1cm}        
\usepackage{makeidx}         
\usepackage{graphicx}        
\usepackage{multicol}        
\usepackage[bottom]{footmisc}

\usepackage{newtxtext}       %
\usepackage[varvw]{newtxmath}       

\makeindex             

\begin{document}
\title*{On the Robin  problem
for the Laplace equation in multiply connected domains}
\author{Alberto Cialdea\orcidID{0000-0002-0009-5957} and Vita Leonessa\orcidID{0000-0001-9547-8397}}
\institute{Alberto Cialdea \at  Università degli Studi della Basilicata, Via dell'Ateneo Lucano n. 10, 85100 Potenza (Italy), \email{alberto.cialdea@unibas.it}
\and Vita Leonessa \at Università degli Studi della Basilicata,  Via dell'Ateneo Lucano n. 10, 85100 Potenza (Italy), \email{vita.leonessa@unibas.it}}
%
%
\maketitle
\abstract*{
This paper complements the existing theory developed in \cite{CiLeMa2012} for the Dirichlet and Neumann  problems for the Laplace equation, in multiply connected domains. Within the framework of layer potential methods, we study the Laplace equation  under Robin boundary conditions, representing the solutions by means of a double layer potential. We observe that   the classical approach  searches the solutions in terms of a single layer potential.
\keywords{Laplace equation  $\cdot$  Robin problem  $\cdot$  boundary integral equations  $\cdot$  potential
theory  $\cdot$  differential forms  $\cdot$  multiply connected domains}}

\abstract{This paper complements the existing theory developed in \cite{CiLeMa2012} for the Dirichlet and Neumann  problems for the Laplace equation, in multiply connected domains. Within the framework of layer potential methods, we study the Laplace equation  under Robin boundary conditions, representing the solutions by means of a double layer potential. We observe that   the classical approach  searches the solutions in terms of a single layer potential.
\keywords{Laplace equation  $\cdot$  Robin problem  $\cdot$  boundary integral equations  $\cdot$  potential
theory  $\cdot$  differential forms  $\cdot$  multiply connected domains}}

\section{Introduction}\label{sec:intro}

Potential theory provides a powerful framework for the analysis of boundary value problems for partial differential equations, allowing solutions to be represented through layer potentials. 
This gave rise to interest in developing methods of potential theory, which in turn led to the exploration of new theoretical frameworks and the development of novel techniques.

 In earlier work \cite{cialdea}, the first author introduced a method for determining the solution of a boundary integral equation of the first kind, arising from the imposition of Dirichlet boundary condition for the Laplace equation, through a single layer potential representation. Note that usually the Dirichlet problem is solved by means of a double layer potential. 
A closely related problem concerns the construction of solutions to the Neumann problem using a double layer potential rather than the more standard single layer representation (see, for instance, \cite[pp. 28–29]{CialdeaHsiao1995} in the case of simply connected domains).
 
The  approach proposed  in \cite{cialdea}  is based on the theories of singular
integral operators and  of reducible operators,  and may be regarded as a higher dimensional generalization of Muskhelishvili’s method (see \cite{ACCialdeanew}). This framework was later developed further and applied to a variety of boundary value problems for different partial differential equations and systems, both in simply and multiply connected domains (see, e.g.  \cite{Cialema2011,ACCiLeMa4,cialema2023} and the references therein).
 
The aim of the present paper is to complement the above mentioned investigation by showing that the same approach can also be applied to boundary value problems with Robin boundary conditions. More precisely, we prove that the corresponding Laplace problem in multiply connected domains $\Omega \subset \mathbb{R}^n$ ($n\geq 2$) 
 can be solved by means of a double layer potential. As in the previously considered cases, the resulting representation of the solution differs from the classical one which is based on a single layer potential  (see, e.g. \cite{medkova} and the references therein). Specifically, the problem consists in finding a  function $u$ in   $\Omega$ such that
\begin{equation}\label{Robin P}
\Delta u = 0 \quad \text{in } \Omega, \qquad
  \frac{\partial u}{\partial \nu}+hu = g 
\quad \text{on } \Sigma=\partial\Omega,
\end{equation}
where $h$ and $g$  are given real valued functions, satisfying suitable hypotheses, and $\partial u/\partial \nu$ denotes
the outward normal derivative.  Note that, in the particular case $h= 0$,   problem  \eqref{Robin P} reduces to  the Neumann problem.

The paper is organized as follows. 
After summarizing notations and definitions in Section \ref{sec:Notation},
in Section \ref{sec:DirichletNeumann}  we collect the main results contained in \cite{CiLeMa2012}. 
Finally, in Section \ref{sec:Robin}  we give  an existence (and uniqueness) theorem   for the solution of
the Robin problem with datum in $L^p(\Sigma)$  in the form of a double layer
potential with density in $W^{1, p}(\Sigma)$.

\section{Notations and preliminaries}\label{sec:Notation}

Let $0\leq k\leq n$ and  let $T\subseteq\mathbb{R}^n$ ($n\geq 2$) be a domain (that is, an open connected set).   
A differential form of degree 
$k$   (briefly a $k$-form) on   $T $  is a function defined on
$T$ taking  values  in the space of $k$-covectors on $\mathbb{R}^n$. If $(x_1,\ldots,x_n)$ is an admissible coordinate
system, then a $k$-form $u$ can be written  as
\begin{equation*}
      u=\frac{1}{k!}  u_{s_{1}\ldots s_{k}}\mathrm{d}x^{s_{1}}  \cdots   \mathrm{d}x^{s_{k}},
\end{equation*}
 where $u_{s_1\ldots s_k}$ are the components  of a skew-symmetric covariant
tensor.

With the symbol $C_k^{h}(T)$ we denote the space of all $k$-forms on $T$ whose components
are $h$-times continuously differentiable   in a
coordinate system of class $C^{h+1}$ (and therefore in every
coordinate system of class $C^{h+1}$).

Similarly,  $L_k^p(T)$ stands for  the
space of all $k$-forms on $T$ whose components are  real-valued $L^p$  functions in a
coordinate system of class $C^1$ (and therefore in every coordinate
system of class $C^1$).

 Now, let $u\in C_{k}^{1}(T)$. The
   differential   $\mathrm{d}u$ of $u$ is the   $(k+1)$-form defined by
 \begin{equation*}
       \mathrm{d}u=\frac{1}{k!} \frac{\partial}{\partial x^{j}}u_{s_{1}\ldots
       s_{k}}\mathrm{d}x^{j} \mathrm{d}x^{s_{1}}  \cdots \mathrm{d}x^{s_{k}}.
 \end{equation*}

 Moreover, if $u\in C_{k}^0(T)$, then  the  Hodge  adjoint (Hodge star)   of $u$ is the  
 ${(n-k)}$-form  
 \begin{equation*}
       \ast u=\frac{1}{k!(n-k)!} \delta^{1\ldots \ldots \ldots \ldots .. n}_
              {s_{1}\ldots s_{k}i_{1}\ldots i_{n-k}}
              u_{s_{1}\ldots s_{k}}\mathrm{d}x^{i_{1}} \ldots  \mathrm{d}x^{i_{n-k}}\,,
 \end{equation*}
 where $ \delta^{1\ldots \ldots \ldots \ldots .. n}_
              {s_{1}\ldots s_{k}i_{1}\ldots i_{n-k}}$ is the Kronecker symbol
 Observe  that   
 \[
 \ast\! \ast u=(-1)^{k(n-k)}u.
 \]
 Using the Hodge star operator, 
 if $u\in C_k^{1}(T)$ one defines the co-differential of $u$ as  the  ${(k-1)}$-form
 \begin{equation*}
       \d u=(-1)^{n(k+1)+1}\ast \mathrm{d}\ast u\,.
 \end{equation*}
 
In what follows, we use a particular double $k$-form introduced by Hodge
in \cite{hodge}, denoted by  $s_{k}(x,y)$  and defined by
\begin{equation*}\label{lab:1FormaHodge}
s_{k}(x,y)=\sum_{j_1<\ldots <j_{k}}s(x,y)\mathrm{d}x^{j_1}\cdots
\mathrm{d}x^{j_{k}} \mathrm{d}y^{j_1}\cdots  \mathrm{d}y^{j_{k}},
\end{equation*}
where $s(x,y)$  is
the fundamental solution of the
Laplace operator
\begin{eqnarray*}
      s(x,y)= \left\{
           \begin{array}{ll}
                  \displaystyle\frac{1}{2\pi} \log |x-y|
                 &\,\,\, n=2,
                 \\
                  \displaystyle\frac{1}{(2-n)c_n}|x-y|^{2-n}&\,\,\, n>2,
             \end{array}
           \right.
   \end{eqnarray*}
and $c_n$ denotes the hypersurface measure of the unit sphere in $\mathbb{R}^n$.
 For the theory of differential forms we refer the reader, for instance, to \cite{forme,flanders}.

Let
$
\Omega_0, \Omega_1, \ldots, \Omega_m
$
be $m+1$ bounded, open, connected subsets of $\mathbb{R}^n$ with $n\geq  2$, and let
$
\Sigma_j = \partial \Omega_j
$
($j=0,1,\ldots,m$) denote their (connected) boundaries, which we assume to be of class $C^{1,\lambda}$ for some $\lambda \in (0,1]$.
Moreover, we require that
$\overline{\Omega}_j \subset \Omega_0$  ($j=1,\ldots,m$),
$\overline{\Omega}_j \cap \overline{\Omega}_k = \emptyset $ ($j\neq k, j,k=1,\ldots, m$),
and define the $(m+1)$-connected domain
$
\Omega = \Omega_0 \setminus \bigcup_{j=1}^m \overline{\Omega}_j\,.
$ Obviously, 
$
\Sigma=\partial \Omega= \bigcup_{j=0}^m \Sigma_j\,.
$
If $y\in\Sigma$, with $\nu_y$ we denote  the outward
unit normal vector at  $y$.

Moreover, let  $1<p<+\infty$. As usual, in what follows   $L^{p}(\Sigma)$ and $W^{1,p}(\Sigma)$ stand
for the Lebesgue and Sobolev spaces, respectively. 

As quoted before, we are interested in looking  for 
solutions of the class of boundary value problems of type \eqref{Robin P} in terms of a double layer potential.  Specifically, we consider the following subset of real valued functions
 \begin{equation}\label{lab:spaceDp}
\mathcal{D}^p=\left\{ u:{\Omega}\to\mathbb{R}\,:\; \exists \psi \in W^{1,p}(\Sigma) \text{ s.t. } u= D\psi\right\},
\end{equation}
where
\begin{equation}\label{lab:DoublePoten}
    D\psi(x) = \int_{\Sigma} \psi(y)\,
    \frac{\partial}{\partial\nu_y} s(x,y)\, \mathrm{d}\sigma_{y},
    \qquad x\in\Omega.
\end{equation}

A particularly useful formula involving the  the double layer potential  establishes that, for any $\psi\in
W^{1,p}(\Sigma)$,
\begin{equation}\label{key formula}
  \frac{\partial }{\partial \nu_x}   \left( \int_{\Sigma} \psi(y)\frac{\partial
}{\partial \nu_y} s(x,y)\,\mathrm{d}\sigma_y\right)
  \,\mathrm{d}\sigma_x=\mathrm{d}_x\int_{\Sigma} \mathrm{d}\psi(y) \wedge s_{n-2}(x,y)
\end{equation}
on $\Sigma$ (see  \cite[p. 187]{cialdea}), where $d_x$ denotes the exterior differentiation. 
Note that this formula
provides a generalization of
formulas which usually can be found in the literature  in a different form and only for $n=2, 3$ (see, e.g.,   \cite[(1.2.14),
(1.2.15)]{HsiaoWen}).

Another type of layer potentials we use in what follows are the single layer potentials. We call 
\begin{equation}\label{lab:spaceSp}
\mathcal{S}^p=\left\{ v:\Omega\to\mathbb{R}\,:\, \exists \varphi\in L^p(\Sigma) \text{ s.t. } v=S\varphi \right\},
\end{equation}
with
\begin{equation}\label{lab:SinglePoten}
S\varphi(x)=\int_{\Sigma} \varphi(y)\,
   s(x,y)\, \mathrm{d}\sigma_{y},
    \qquad x\in\Omega.
\end{equation}

We want to point out that in $\mathbb{R}^2$ there exist simply connected domains $\Omega$ for which a non zero constant  cannot be represented by a single layer potential. When this occurs, the boundary $\Sigma$ is said to be \emph{exceptional}.
For example the unit circle is exceptional. In \cite[Theorem~3.1]{CiLeMa2012} it is shown that this phenomenon also arises for $(m+1)$-connected domains. In particular, $\Sigma$ is exceptional  if, and only if,  the exterior boundary $\Sigma_0$ is exceptional. 

Therefore, whenever $n = 2$ and $\Sigma_0$ is exceptional, we say that $v \in \mathcal{S}^p$ if, and only if, 
\begin{equation}\label{lab:SinglePotenBis}
v(x) = \int_{\Sigma} \varphi(y)\log |x-y|  \mathrm{d}s_y + c,\quad x\in\Omega,
\end{equation}
with $\varphi \in L^p(\Sigma)$ and $c \in \mathbb{R}$.

Finally,  let $B_1$ and $B_2$ be   two Banach spaces and consider a continuous linear operator $A:B_1\rightarrow B_2$.
We say that $A$ can be reduced on the
left if there exists a continuous linear
operator $A':B_2\rightarrow B_1$ such that $A'A=I+K$,
where $I$ stands for the identity operator of $B_1$ and $K:B_1\rightarrow
B_1$ is a compact operator. Analogously, one can define an operator $A$ reducible on the right.
One of the main properties of such operators is that the
equation $A\alpha=\beta$ has a solution if, and only if, $\langle
\gamma, \beta \rangle=0$ for any $\gamma$ such that $A^*\gamma=0$,
$A^*$ being the adjoint of $A$.
For further details, see for instance \cite{fichera,mikhlin}.

\section{On the Dirichlet and Neumann problems}\label{sec:DirichletNeumann}

In this section we summarize  the main results contained in \cite{CiLeMa2012}, where the Dirichlet and Neumann boundary value problems for the Laplace equation in $(m+1)$-connected domains are discussed using integral potential methods.  In particular,  the solutions are constructed via single layer potentials for the Dirichlet problem and double layer potentials for the Neumann problem,  relying on the theory of singular integral operators and on the theory of differential forms. 

The first result we mention regards the possibility to reduce on the left  the singular integral operator   $J:L^{p}(\Sigma)\longrightarrow L_{1}^{p}(\Sigma)$ defined as follows
\begin{equation}\label{lab:OperatoreJ}
J\varphi(x)= \int_{\Sigma} \varphi(y)\mathrm{d}_{x}[s(x,y)]
\mathrm{d}\sigma_y,\quad  x\in \Sigma.
\end{equation}

\begin{lemma}\label{lemma:RidLaplace}
The  singular integral operator  $J$
 can be reduced on the left by the operator  $
 J^{'}:L_{1}^{p}(\Sigma)\longrightarrow L^{p}(\Sigma)$ defined as
      \begin{equation}
            J^{'}\psi(z)=\underset{{\Sigma}}{\ast}\int_{\Sigma}\psi(x)\wedge 
            \mathrm{d}_{z}[s_{n-2}(z,x)],\quad z\in \Sigma,
                \label{lab:J'}
      \end{equation}
where  
the symbol $\underset{{\Sigma}}{\ast}$ means that, if $w$ is an $(n-1)$-form on $\Sigma$ and $w=w_0\,\mathrm{d}\sigma$, then $\underset{{\Sigma}}{\ast}w=w_0$.
\end{lemma}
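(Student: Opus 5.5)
The plan is to compute the composition $J'J$ explicitly, to show it equals the identity plus a compact operator, and to reduce everything to known facts about singular integrals on a $C^{1,\lambda}$ boundary.

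\emph{Step 1: exchange the order of integration.} For $\varphi\in L^p(\Sigma)$, inserting \eqref{lab:OperatoreJ} into \eqref{lab:J'} gives
\begin{equation*}
J'J\varphi(z)=\underset{\Sigma}{\ast}\int_\Sigma \Big(\int_\Sigma \varphi(y)\,\mathrm{d}_x[s(x,y)]\,\mathrm{d}\sigma_y\Big)\wedge \mathrm{d}_z[s_{n-2}(z,x)].
\end{equation*}
Both kernels are of Calder\'on--Zygmund type. I will therefore justify exchanging the two singular integrations by the Poincar\'e--Bertrand type composition formula for singular integrals on Lyapunov surfaces (in the form of Mikhlin's symbolic calculus). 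This yields an iterated singular operator whose kernel is
\begin{equation*}
\int_\Sigma \mathrm{d}_x[s(x,y)]\wedge \mathrm{d}_z[s_{n-2}(z,x)],
\end{equation*}
plus possibly a multiple of the identity coming from the composition formula.

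\emph{Step 2: identify the composite kernel.} This is the main obstacle. The idea is to use the Hodge form identities: $\mathrm{d}_z s_{n-2}(z,x)$ is related to $\ast_x \mathrm{d}_x s_1$, and $\mathrm{d}_x s(x,y)\wedge \ast\cdots$ reproduces normal derivatives. Concretely, $\mathrm{d}_x[s(x,y)]\wedge \mathrm{d}_z[s_{n-2}(z,x)]$, restricted to $\Sigma$ in $x$, is an $(n-1)$-form in $x$ whose coefficient behaves like $\partial_{\nu}$-type kernels. I will reduce the integral over $x$ to Green-type formulas:
\begin{itemize}
\item apply Stokes' theorem on $\Sigma$ (a closed manifold, since each $\Sigma_j$ has no boundary) to move $\mathrm{d}_x$ from one factor onto the other;
\item then use $\Delta s=\delta$ together with the jump relations for the double layer potential, as in \eqref{key formula}.
\end{itemize}
The expected outcome is that the principal part equals $\frac14\varphi$ from each side combined appropriately, i.e. $c\,\varphi$ with $c\neq 0$, up to a harmless normalization. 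The remaining terms involve kernels of the form $\frac{\partial}{\partial\nu}s$ and their compositions. On a $C^{1,\lambda}$ surface these are weakly singular, $O(|x-y|^{1-n+\lambda})$, and hence compact on $L^p(\Sigma)$.

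\emph{Alternative route via symbols.} Equivalently, one can argue with symbols. The principal symbol of $J$ at $(x,\xi)$, $\xi$ tangent, is proportional to $i\xi/|\xi|$ as a tangential 1-form, plus the normal component $\pm\frac12\nu$. The symbol of $J'$ is obtained by wedging with a form whose principal part is the tangential Hodge dual; the product of the symbols reduces to $|\xi|^2/|\xi|^2$, which is a nonzero constant. The multiple connectivity is irrelevant here, because the analysis is local and $\Sigma$ is a disjoint union of compact closed $C^{1,\lambda}$ hypersurfaces.

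\emph{Step 3: normalize and conclude.} After checking the constant (and absorbing signs and factors of $2$ into $J'$ if needed), we obtain $J'J=I+K$ with $K$ compact on $L^p(\Sigma)$. Continuity of $J$ and $J'$ between the stated spaces follows from the $L^p$-boundedness of singular integrals on Lyapunov boundaries, which completes the reduction. I expect the delicate point to be the precise bookkeeping of signs and constants in Step 2. I would first carry out the computation on a half-space, flattening $\Sigma$ locally, where all kernels are explicit. I would then transfer the result by freezing coefficients, the error being compact thanks to the H\"older continuity of $\nu$.
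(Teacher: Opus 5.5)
You have the right target, namely $J'J=c\,I+(\text{compact})$ with $c\neq 0$, but the proposal does not prove it. The only non-trivial content of the lemma is identifying the non-compact part of $J'J$. In Step~2 you leave exactly that as an ``expected outcome''. In the symbolic route you assert that the product of the symbols is a nonzero constant without ever computing the symbol of $J'$.

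Step~1 is also not innocuous. For $n\ge 3$ the composite kernel $\int_\Sigma \mathrm{d}_x[s(x,y)]\wedge \mathrm{d}_z[s_{n-2}(z,x)]$ is not absolutely convergent near $x=z$ and near $x=y$. The ``possible multiple of the identity'' produced by the composition formula is precisely the term $-\frac14\varphi$ you need (note the sign), so it cannot be left undetermined.

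Moreover, the symbol of $J$ has no normal component. $J\varphi\in L^p_1(\Sigma)$ is the tangential form given by a principal value, and the $\pm\frac12$ jump belongs to $\partial_\nu S\varphi$, not to $J$. If you want to keep the symbolic route, you would need to check that, modulo operators with weakly singular kernels, $J'$ coincides up to sign with the adjoint of $J$. Then the symbol of $J'J$ is $\pm|\sigma_J|^2\neq 0$.

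The missing idea, which makes any kernel composition unnecessary, is to read $J'$ through \eqref{key formula}. For H\"older continuous $\varphi$ one has $J\varphi=\mathrm{d}(S\varphi)$ on $\Sigma$, and \eqref{key formula} says exactly that $J'(\mathrm{d}\psi)=\partial_\nu D\psi$. Hence $J'J\varphi=\partial_\nu D(S\varphi)$.

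Green's representation formula for the harmonic function $S\varphi$ in $\Omega$ gives $D(S\varphi)=S\varphi+S(\partial_\nu^- S\varphi)$ in $\Omega$, where $\partial_\nu^-$ is the limit from inside $\Omega$. With the jump relation $\partial_\nu^- S\varphi=-\frac12\varphi+K\varphi$ one gets
\[
J'J\varphi=\Big(-\frac12 I+K\Big)\Big(\frac12 I+K\Big)\varphi=-\frac14\varphi+K^2\varphi .
\]
This extends to all $\varphi\in L^p(\Sigma)$ by density, since both sides are bounded on $L^p(\Sigma)$. This is \eqref{J'J riduzione}. Since $K$ has a weakly singular kernel on a $C^{1,\lambda}$ surface, $-4J'$ is a left reducer.

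Observe also that the paper needs the explicit remainder $K^2$, not just ``some compact operator''. It is what produces the Fredholm equation \eqref{integral equation 2} and the regularity bootstrap used afterwards. So even a completed symbolic argument would deliver less than what the paper later relies on.
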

 
 For the proof of Lemma \ref{lemma:RidLaplace} see \cite[Lemma 4.1]{CiLeMa2012}. Here, we only want to  remark that
 \begin{equation}\label{J'J riduzione}
J'J\varphi=-\frac{1}{4}\varphi+K^2 \varphi,
\end{equation}
where $K:L^p(\Sigma)\to L^p(\Sigma)$ is the compact linear operator
\begin{equation}\label{operatore K}
    K\varphi(x):=\int_{\Sigma} \varphi(y)\frac{\partial }{\partial \nu_x}s(x,y)\,\mathrm{d}\sigma_y,\quad \varphi\in L^p(\Sigma),\,x\in\Sigma.
\end{equation}

Thanks to Lemma \ref{lemma:RidLaplace}, it is possible to prove that, for a given $\omega\in L^p_1(\Sigma)$, there exists a solution of the singular integral equation on $\Sigma$
\begin{equation}\label{equazione integrale}
  J \varphi=\omega, \quad \varphi\in L^p(\Sigma),  
\end{equation}
if, and only if,
\begin{equation*}\label{CondCompatDirichlet}
    \int_{\Sigma}  \gamma  \wedge \omega =0
\end{equation*}
for every weakly closed form  $\gamma \in L_{n-2}^q(\Sigma)$, where
$q=(p-1)/p$ (see \cite[Theorem 4.2]{CiLeMa2012}).
Then, if   we set $\omega=\mathrm{d} f$ ($f\in W^{1,p}(\Sigma)$) in \eqref{equazione integrale},   the problem
\begin{equation*}\label{problemaLemma1}
    \left\{
      \begin{array}{ll}
      w\in \mathcal{S}^p, &\\
        \Delta w=0  & \,\, \textrm{in }\Omega, \\
        \mathrm{d}w=\mathrm{d}f & \,\,\textrm{on }\Sigma
      \end{array}
    \right.
\end{equation*}
admits a solution given by a single layer potential whose density  $\varphi\in L^p(\Sigma)$ solves the singular integral equation
\begin{equation}\label{Jphi=df}
  \int_{\Sigma} \varphi(y)\mathrm{d}_{x}[s(x,y)]
\mathrm{d}\sigma_y=\mathrm{d}f(x),\quad  x\in \Sigma,
 \end{equation}
 
Moreover, let $\mathcal{V}$ be   the eigenspace   of the Fredholm
 integral equation
\begin{equation*}
\dfrac{1}{2}\psi(x)+\displaystyle\int_{\Sigma} \psi(y)\frac{\partial}{\partial \nu_x}s(x,y)\,\mathrm{d}\sigma_y=0,\quad x\in \Sigma.
\end{equation*}
It is well-known that, if $\varphi\in L^p(\Sigma)$ belongs to $\mathcal{V}$, then $\varphi$ is H\"{o}lder continuous.
Moreover, the dimension of $\mathcal{V}$ is $m$ \cite[Proposition 3.37, p. 136]{folland}.

Then, in \cite[Lemma 4.4]{CiLeMa2012} it has been proven that, given $c_0,c_1,\ldots,c_m\in\mathbb{R}$,  the following problem
\begin{equation*} 
    \left\{
      \begin{array}{ll}
      v\in\mathcal{S}^p, &\\
        \Delta v=0 & \,\, \textrm{in }\Omega, \\
        v=c_h &\,\, \textrm{on } \Sigma_h,\,h=0,\ldots,m
      \end{array}
    \right.
\end{equation*}
has  a solution  given by
\begin{equation*} 
    v(x)=\sum_{h=1}^m (c_h-c_0)\int_{\Sigma} \Psi_h(y)s(x,y)\,d\sigma_y +c_0,\,\,\,x\in\Omega,
\end{equation*}
where $\Psi_h$ are elements of  $\mathcal{V}$ ($h=1,\ldots,m$) satisfying 
\begin{equation*}
 \int_{\Sigma} \Psi_h(y)s(x,y)\,\mathrm{d}\sigma_y=\delta_{hk},\quad \forall\, x\in\overline{\Omega}_k,\ k=1,\ldots,m.
\end{equation*}
 Taking all above results into account, the following theorem holds (see \cite[Theorem 4.5]{CiLeMa2012}.
\begin{theorem}\label{dirichlet}
The Dirichlet problem \begin{eqnarray}
       \left\{
           \begin{array}{ll}
                  u\in\mathcal{S}^{p}, & \\
                  \Delta u =0
                 & \textrm{in $\Omega$,}
                 \\
                  u=f \quad
                  & \textrm{on $\Sigma$,}
             \end{array}
           \right.
                      \label{DirichletProblem}
   \end{eqnarray}
 has a unique solution for every $f\in W^{1,p}(\Sigma)$.
\end{theorem}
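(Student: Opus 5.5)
Existence will come from combining the two auxiliary problems recalled above: first we match the tangential differential of the datum, then we fix the constants on the boundary components. Uniqueness will follow from the classical uniqueness theorem for the Dirichlet problem, since single layer potentials with $L^p$ densities are regular enough.

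\emph{Step 1 (matching $\mathrm{d}f$).} Let $f\in W^{1,p}(\Sigma)$ and set $\omega=\mathrm{d}f\in L^p_1(\Sigma)$. By \cite[Theorem 4.2]{CiLeMa2012}, the equation $J\varphi=\omega$ is solvable iff $\int_\Sigma\gamma\wedge\mathrm{d}f=0$ for every weakly closed $\gamma\in L^q_{n-2}(\Sigma)$. This condition holds: on each closed component $\Sigma_j$, an integration by parts (Stokes for $W^{1,p}$ functions against weakly closed forms) gives $\int_{\Sigma_j}\gamma\wedge\mathrm{d}f=\pm\int_{\Sigma_j}\mathrm{d}\gamma\, f=0$. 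Checking this is precisely the definition of weak closedness, tested with $f$ after approximating $f$ by smooth functions. Thus we obtain $\varphi\in L^p(\Sigma)$ solving \eqref{Jphi=df}, and $w=S\varphi\in\mathcal{S}^p$ is harmonic with $\mathrm{d}w=\mathrm{d}f$ on $\Sigma$. Here the boundary value of $w$ is the nontangential limit, which exists a.e. and belongs to $W^{1,p}(\Sigma)$. Since each $\Sigma_h$ is connected, $w-f$ is then a.e. equal to a constant $c_h$ on $\Sigma_h$, $h=0,\ldots,m$.

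\emph{Step 2 (correcting constants).} By \cite[Lemma 4.4]{CiLeMa2012} there is $v\in\mathcal{S}^p$, harmonic, with $v=c_h$ on $\Sigma_h$, given explicitly via the $\Psi_h\in\mathcal{V}$. Then $u=w-v$ belongs to $\mathcal{S}^p$, which is a linear space; in the exceptional planar case this uses the convention \eqref{lab:SinglePotenBis}, which is where the additive constant $c_0$ is absorbed. This $u$ is harmonic and satisfies $u=f$ on $\Sigma$.

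\emph{Uniqueness and main obstacle.} If $u=S\varphi\in\mathcal{S}^p$ is harmonic with $u=0$ on $\Sigma$, we need $u\equiv0$. A single layer potential with $L^p$ density has nontangential maximal function of $\nabla u$ in $L^p(\Sigma)$, so $u$ is continuous up to $\overline\Omega$ when $p$ is large, or at least lies in the class where the $L^p$ Dirichlet problem is unique. In either case the maximum principle gives $u=0$. If one prefers a self-contained argument, $u$ vanishing on $\Sigma$ together with the jump relations shows that $\varphi$ satisfies a homogeneous Fredholm relation. One then uses the trivial kernel of the interior Dirichlet problem together with the dimension count $\dim\mathcal{V}=m$ to conclude $S\varphi=0$ in $\Omega$. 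I expect this uniqueness step, namely justifying the maximum principle for boundary values taken only in the nontangential a.e. sense, to be the main technical point. The existence part reduces cleanly to the quoted results, with only the verification of the compatibility condition in Step 1 needing care.
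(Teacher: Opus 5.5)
Your existence argument is the paper's: you solve $J\varphi=\mathrm{d}f$, where the compatibility condition of \cite[Theorem 4.2]{CiLeMa2012} holds by the definition of weak closedness (as you note), observe that $S\varphi-f$ is constant on each $\Sigma_h$, and subtract the potential $v\in\mathcal{S}^p$ of \cite[Lemma 4.4]{CiLeMa2012}; your uniqueness argument via nontangential estimates and $L^p$-uniqueness on $C^1$ domains is also sound (the paper itself only cites \cite[Theorem 4.5]{CiLeMa2012} for this). Only your ``self-contained'' alternative is muddled, since invoking the trivial kernel of the interior Dirichlet problem is circular and $\dim\mathcal{V}=m$ plays no role; the clean version, in the spirit of the paper's bootstrap argument for the Robin problem, is $u=0$ on $\Sigma\Rightarrow J\varphi=0\Rightarrow -\frac14\varphi+K^2\varphi=0$, so $\varphi$ is H\"older continuous, $u$ is continuous on $\overline\Omega$, and the classical maximum principle gives $u\equiv0$.
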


In conclusion, to solve the Dirichlet problem \eqref{DirichletProblem}, we have to solve the singular integral equation \eqref{Jphi=df}, which can be reduced to a Fredholm equation by means of the operator $J'$. This reduction is not equivalent in the usual sense, since $\text{ker}(J') \neq \{0\}$  (see, e.g., \cite[p.~19]{mikhlin}). 

However,  $J'$ still yields a weaker form of equivalence. Indeed, it is shown in \cite[pp.~253--254]{cialdeamlp} that $\text{ker}(J'J) = \text{ker}(J)$. Consequently, whenever the equation $J\varphi = \psi$ admits a solution, it holds that $J\varphi = \psi$ if, and only if,  $J'J\varphi = J'\psi$. 

The form $df$ being weakly closed,   the equation $J\varphi = \mathrm{d}f$ is solvable,  and then $J\varphi = \mathrm{d}f$ if, and only if,  $\varphi$ solves the Fredholm equation $J'J\varphi = J'df$.

We pass now to consider the Neumann problem
\begin{eqnarray}
       \left\{
           \begin{array}{ll}
                  u\in\mathcal{D}^{p}, & \\
                  \Delta u =0
                 & \,\, \textrm{in $\Omega$,}
                 \\
                  \dfrac{\partial u}{\partial \nu}=g
                  &\,\,  \textrm{on $\Sigma$,}
             \end{array}
           \right.
                                \label{lab:NeumannProblem}
   \end{eqnarray}
with  the datum $g\in L^{p}(\Sigma)$ satisfying the natural compatibility condition
\begin{equation}\label{media}
\int_{\Sigma}g\, \mathrm{d}\sigma=0.
\end{equation}
   The existence (and uniqueness up
to an additive constant) theorem runs as follows.

\begin{theorem}\label{thm solvability 1}
Given $g\in L^p(\Sigma) $,  the Neumann problem (\ref{lab:NeumannProblem}) admits a solution    if, and only if,
\begin{equation}\label{media2}
    \int_{\Sigma_j}g\,\mathrm{d}\sigma=0,\quad j=0,1,\ldots,m.
\end{equation}
The solution is uniquely determined up to an additive constant.\\ Moreover,  the double layer potential (\ref{lab:DoublePoten}) is a solution of (\ref{lab:NeumannProblem}) if, and only if, its density $\psi$  is given by
\begin{equation*} 
    \psi(x)=\int_{\Sigma} \varphi(y)s(x,y)\,\mathrm{d}\sigma_y,\quad x\in\Sigma,
\end{equation*}
  where $\varphi$ is a solution of the Fredholm equation
  \begin{equation*} 
  -\frac{1}{4}\varphi+K^2\varphi=g,
  \end{equation*}
  $K$ being the operator (\ref{operatore K}).
\end{theorem}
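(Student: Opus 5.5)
The plan is to combine formula \eqref{key formula} with Lemma \ref{lemma:RidLaplace} and the identity \eqref{J'J riduzione}, so that the Neumann condition for a double layer potential becomes a Fredholm equation for a single-layer-type density.

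\textbf{Step 1: rewrite the boundary condition.} Let $u=D\psi$ with $\psi\in W^{1,p}(\Sigma)$. By \eqref{key formula}, $\frac{\partial u}{\partial\nu}\,\mathrm{d}\sigma=\mathrm{d}_x\int_\Sigma \mathrm{d}\psi(y)\wedge s_{n-2}(x,y)$ on $\Sigma$. Applying $\underset{\Sigma}{\ast}$, the Neumann condition is equivalent to
\[
\underset{\Sigma}{\ast}\int_{\Sigma}\mathrm{d}\psi(x)\wedge\mathrm{d}_z[s_{n-2}(z,x)]=\pm g ,
\]
that is, to $J'(\mathrm{d}\psi)=\pm g$, with the sign fixed by the orientation conventions of \eqref{lab:J'}. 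I expect to track this sign so that it is $+$, matching the statement.

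\textbf{Step 2: sufficiency of the representation.} Suppose $\varphi$ solves $-\frac14\varphi+K^2\varphi=g$, and set $\psi=\int_\Sigma\varphi(y)s(\cdot,y)\,\mathrm{d}\sigma_y$. Then $\psi\in W^{1,p}(\Sigma)$, because the trace of a single layer potential with $L^p$ density lies in $W^{1,p}$. Its tangential differential is $\mathrm{d}\psi=J\varphi$. Hence, by \eqref{J'J riduzione},
\[
J'\mathrm{d}\psi=J'J\varphi=-\tfrac14\varphi+K^2\varphi=g ,
\]
so $D\psi$ solves \eqref{lab:NeumannProblem}.

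\textbf{Step 3: solvability condition.} I would factor $-\frac14 I+K^2=(K-\frac12 I)(K+\frac12 I)$ and apply Fredholm theory. The equation is solvable if and only if $g$ annihilates the kernel of the adjoint $(K^*-\frac12)(K^*+\frac12)$. By the Fredholm alternative, this kernel is spanned by the kernels of $K^*\pm\frac12$. One of these consists of locally constant functions on the components $\Sigma_j$; the other has dimension $m$, associated with $\mathcal V$ and the interior components.

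The main obstacle is to show that the combined annihilator conditions are exactly \eqref{media2}, i.e. that the adjoint kernel is spanned by the characteristic functions of $\Sigma_0,\dots,\Sigma_m$. For the second factor I would try to use the classical description of the null spaces of $\pm\frac12 I+K^*$ on an $(m+1)$-connected domain, together with the densities $\Psi_h$ of \cite[Lemma 4.4]{CiLeMa2012}. Necessity of \eqref{media2} can be checked directly: $\int_{\Sigma_j}\partial_\nu D\psi\,\mathrm{d}\sigma=\int_{\Sigma_j}\mathrm{d}(\cdots)=0$ by Stokes, since $\Sigma_j$ is closed.

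\textbf{Step 4: uniqueness and the converse.} For uniqueness up to constants, I would use Green's identity: a harmonic $u\in\mathcal D^p$ with zero normal derivative has $\int_\Omega|\nabla u|^2=0$, after justifying this via nontangential estimates. For the converse of the representation, suppose $D\psi$ is a solution. Then $J'\mathrm{d}\psi=g$. Since $\mathrm{d}\psi$ is exact, it is weakly closed, so by \cite[Theorem 4.2]{CiLeMa2012} there is $\varphi$ with $J\varphi=\mathrm{d}\psi$. Then $\psi$ and $\int_\Sigma\varphi(y)s(\cdot,y)\,\mathrm{d}\sigma_y$ differ by a locally constant function. Using Theorem \ref{dirichlet} and Lemma 4.4, a locally constant function is itself the trace of a single layer potential (with a constant added in the exceptional case), so $\varphi$ can be adjusted to absorb the difference. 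Finally $J'J\varphi=g$, which is the Fredholm equation.
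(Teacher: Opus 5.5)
Your skeleton is the reduction the paper itself uses in its Robin argument: \eqref{key formula} turns the Neumann condition into $J'(\mathrm{d}\psi)=g$, you write $\psi=S\varphi$, and $J'J=-\tfrac14 I+K^2$. The gap is the step you call the main obstacle. You leave it undone, and it is exactly what gives sufficiency of \eqref{media2}. Your description of the two factor kernels is also imprecise. The splitting $\ker\bigl((K^*-\tfrac12 I)(K^*+\tfrac12 I)\bigr)=\ker(K^*-\tfrac12 I)\oplus\ker(K^*+\tfrac12 I)$ does not come from the Fredholm alternative; it follows algebraically from $(K^*+\tfrac12 I)-(K^*-\tfrac12 I)=I$. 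Both summands consist of locally constant functions. Since $K^*$ is the direct value of $D$, the identities $D1=1$ in $\Omega$, $D1=0$ off $\overline\Omega$, and $D\chi_{\Sigma_h}=-\chi_{\Omega_h}$ ($h\ge1$) put $1$ in the first summand and $\chi_{\Sigma_1},\dots,\chi_{\Sigma_m}$ in the second.

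What closes the argument is a dimension count, and you already have the ingredients.
\begin{itemize}
\item Apply your Stokes computation to $u=D(S\varphi)$ with arbitrary $\varphi\in L^p(\Sigma)$. It gives $\int_{\Sigma_j}(-\tfrac14\varphi+K^2\varphi)\,\mathrm{d}\sigma=0$, so every $\chi_{\Sigma_j}$, $j=0,\dots,m$, lies in $\ker(-\tfrac14 I+(K^*)^2)$.
\item Index zero and the same splitting for $K$ give $\dim\ker(-\tfrac14 I+(K^*)^2)=\dim\ker(K-\tfrac12 I)+\dim\ker(K+\tfrac12 I)=1+m$. This uses $\dim\mathcal V=m$ and the classical fact that $\ker(K-\tfrac12 I)$ is one-dimensional.
\item Alternatively, $\ker(J'J)=\ker J$ \cite{cialdeamlp}. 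For non-exceptional $\Sigma$ this kernel is $(m+1)$-dimensional by \cite[Lemma 4.4]{CiLeMa2012} and the injectivity of $S$.
\end{itemize}
Hence the $\chi_{\Sigma_j}$ span the adjoint kernel, and the compatibility conditions are exactly \eqref{media2}.

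Two smaller points concern Step~4.
\begin{itemize}
\item The detour through \cite[Theorem 4.2]{CiLeMa2012} is unnecessary, since Theorem~\ref{dirichlet} already writes every $\psi\in W^{1,p}(\Sigma)$ as $S\varphi$ on $\Sigma$. When $\Sigma$ is exceptional, however, the constant cannot be absorbed into $\varphi$ as you claim. The density $\psi\equiv1$ gives $D\psi=1$ in $\Omega$, a solution with $g=0$, yet $1$ is not a single layer trace. There the representation must be read as $\psi=S\varphi+c$, in the sense of \eqref{lab:SinglePotenBis}.
\item For uniqueness, Green's identity is not directly available in $\mathcal D^p$ when $p<2$, and you can avoid it entirely. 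If $\partial u/\partial\nu=0$ with $u=D\psi$, write $\psi=S\varphi$ (plus a constant in the exceptional case), so $J'J\varphi=0$. Then $\mathrm{d}\psi=J\varphi=0$ because $\ker(J'J)=\ker J$. Thus $\psi$ is locally constant, and the same formulas for $D1$ and $D\chi_{\Sigma_h}$ show that $u=D\psi$ is constant in $\Omega$.
\end{itemize}
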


Among other things, Theorem \ref{thm solvability 1}  shows that we can represent a solution of the Neumann problem (\ref{lab:NeumannProblem}) by means of  a double layer potential if, and only if, conditions (\ref{media2}) are satisfied.

If $g$ satisfies  the only condition   (\ref{media}),
  we need to modify the representation of the solution by adding some extra terms, as stated in \cite[Theorem 5.4]{CiLeMa2012}.   
\begin{theorem} 
Given $g\in L^p(\Sigma) $   satisfying  (\ref{media}), consider the Neumann problem
\begin{eqnarray}
       \left\{
           \begin{array}{ll}
                  u\in\widetilde{\mathcal{S}}^{p}, & \\
                  \Delta u =0
                 & \,\,\textrm{in $\Omega$,}
                 \\
                  \dfrac{\partial u}{\partial \nu}=g
                  &\,\, \textrm{on $\Sigma$}\,,
             \end{array}
           \right.
                                \label{NeumannProblemBis}
   \end{eqnarray}
   where  the symbol  $\widetilde{\mathcal{S}}^p$ stands for  the space  of all harmonic functions 
\begin{equation}\label{sol gen}
      u(x)=\int_{\Sigma} \psi(y)\frac{\partial}{\partial \nu_y}s(x,y)\,\mathrm{d}\sigma_y+\sum_{j=1}^m  c_j \int_{\Sigma_j} s(x,y)\,\mathrm{d}\sigma_y, \quad x\in\Omega,
\end{equation}
where $\psi\in W^{1,p}(\Sigma)$ and $ c_j\in\mathbb{R} $ ($j=1,\ldots,m$). 
 
 Then, problem  \eqref{NeumannProblemBis} admits a solution as in  \eqref{sol gen} with 
\begin{equation*} 
  c_j= - \dfrac{1}{|\Sigma_j|}\int_{\Sigma_j} g\mathrm{d}\sigma\quad (j=1,\ldots, m)\,.
\end{equation*}
The solution is uniquely determined up to an additive constant.
\end{theorem}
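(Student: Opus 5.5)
The strategy is to reduce to Theorem \ref{thm solvability 1}: use the extra single layer terms in \eqref{sol gen} to absorb the fluxes of $g$ through each boundary component, and let a double layer potential handle the remainder. For $j=1,\ldots,m$ set $w_j(x)=\int_{\Sigma_j}s(x,y)\,\mathrm{d}\sigma_y$. Each $w_j$ is harmonic in $\Omega$ and its normal derivative is a bounded (Hölder) function on $\Sigma$. The plan is to compute the flux of $w_j$ through each $\Sigma_k$ and then choose the constants $c_j$ so that the residual datum
\[
\tilde g=g-\sum_{j=1}^m c_j\frac{\partial w_j}{\partial\nu}
\]
satisfies conditions \eqref{media2}. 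Once that is done, Theorem \ref{thm solvability 1} gives $\psi\in W^{1,p}(\Sigma)$ with $\partial(D\psi)/\partial\nu=\tilde g$, and then $u=D\psi+\sum_j c_jw_j$ solves \eqref{NeumannProblemBis}.

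\textbf{Flux computation.} We use the jump relations for the single layer potential with density $\chi_{\Sigma_j}$, the characteristic function of $\Sigma_j$, together with Gauss' flux theorem, namely $\int_{\partial G}\partial_\nu s(x,y)\,\mathrm{d}\sigma_x=1$ if $y\in G$ and $0$ if $y\notin\overline G$.

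\emph{Flux through $\Sigma_k$ with $k\neq j$.} Here $x\in\Sigma_k$ and $y\in\Sigma_j$ are separated, so we may differentiate under the integral and swap the order of integration. Note that on $\Sigma_k$ ($k\ge1$) the outward normal to $\Omega$ points into $\Omega_k$. For $k\ge1$, $k\ne j$, the point $y$ lies outside $\overline\Omega_k$, so the flux is $0$. For $k=0$, the normal to $\Omega$ on $\Sigma_0$ coincides with the outward normal of $\Omega_0$, and $y\in\Omega_0$, so
\[
\int_{\Sigma_0}\partial_\nu w_j\,\mathrm{d}\sigma=|\Sigma_j|.
\]

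\emph{Flux through $\Sigma_j$ itself.} Since $w_j$ is harmonic in $\Omega$, the divergence theorem says the total flux over $\Sigma$ vanishes, so the flux through $\Sigma_j$ must be $-|\Sigma_j|$. This can also be checked directly from the jump formula: the inner $\Omega$-side limit picks up the term $\mp\frac12$ and the principal value contributes $-\frac12$.

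\textbf{Choice of constants.} With these fluxes, the conditions \eqref{media2} for $\tilde g$ read
\[
\int_{\Sigma_j}g\,\mathrm{d}\sigma+c_j|\Sigma_j|=0\quad(j\ge1),\qquad
\int_{\Sigma_0}g\,\mathrm{d}\sigma-\sum_{j=1}^m c_j|\Sigma_j|=0.
\]
The first family gives exactly $c_j=-\frac{1}{|\Sigma_j|}\int_{\Sigma_j}g\,\mathrm{d}\sigma$. The $\Sigma_0$ condition then follows from \eqref{media}. We also need $\tilde g\in L^p(\Sigma)$, which holds because each $\partial_\nu w_j$ is bounded.

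\textbf{Uniqueness.} Suppose $u$ as in \eqref{sol gen} has zero Neumann data. Since $\partial_\nu D\psi$ has zero flux through every $\Sigma_k$ (the compatibility part of Theorem \ref{thm solvability 1}), the flux computation above forces every $c_j=0$. Then $u=D\psi$ solves the homogeneous problem, and Theorem \ref{thm solvability 1} shows it is constant.

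\textbf{Main obstacle.} The step needing the most care is the flux computation on $\Sigma_j$ itself, that is, correctly tracking the sign conventions of the normal on the inner boundary components and the jump relation. Using the divergence theorem to bypass this, as above, is the route I would take. One remaining technical point is to confirm that $D\psi$ indeed has zero flux through each component for $\psi\in W^{1,p}$. This follows from \eqref{key formula}: the right-hand side there is an exact form $\mathrm{d}_x(\cdot)$, so its integral over each closed $\Sigma_k$ vanishes.
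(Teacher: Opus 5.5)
Your proof is correct. The paper gives no proof of this statement; it only recalls it from \cite[Theorem 5.4]{CiLeMa2012}, so there is no in-paper argument to compare against. Your reduction to Theorem \ref{thm solvability 1} is the natural one and gives exactly the stated constants. You subtract $\sum_j c_j w_j$ so that the residual datum has zero flux through every $\Sigma_k$. This uses that the flux of $w_j$ is $|\Sigma_j|$ through $\Sigma_0$ and $-|\Sigma_j|\delta_{jk}$ through $\Sigma_k$, $k\ge 1$. Your uniqueness argument is also sound: the fluxes of $\partial_\nu D\psi$ vanish by the necessity part of that theorem, which forces every $c_j=0$.

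Two minor points:
\begin{itemize}
\item The divergence-theorem step needs $\nabla w_j$ to extend continuously to $\overline{\Omega}$. This holds here because the density is constant on a $C^{1,\lambda}$ surface.
\item In your direct check on $\Sigma_j$, the $\Omega$-side jump term is $-\frac{1}{2}$, not $\mp\frac{1}{2}$, since $\nu$ points into $\Omega_j$ and $\Delta s=\delta$. The weakly singular term (not a principal value) contributes $-\frac{1}{2}|\Sigma_j|$ after Fubini and Gauss' formula at boundary points. This agrees with the value $-|\Sigma_j|$ you got from the divergence theorem.
\end{itemize}
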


\section{On the  Robin problem}\label{sec:Robin}

 Let us consider the Robin problem  
\begin{eqnarray}
       \left\{
           \begin{array}{ll}
                  \Delta u =0
                 & \,\, \textrm{in $\Omega$,}
                 \\
                 \dfrac{\partial u}{\partial \nu}+hu=g
                  &\,\,  \textrm{on $\Sigma$,}
             \end{array}
           \right.
                                \label{lab:RobinProblem}
   \end{eqnarray}
where the datum  $g$ is given in $L^{p}(\Sigma)$, $1<p<\infty$.
 
Note that if $h= 0$ a.e. on $\Sigma$, then the  problem \eqref{lab:RobinProblem} turns into the Neumann problem.  

We assume that $h\in L^\infty(\Sigma)$ is such that
 \begin{equation}\label{funzione h}
 h\geq 0 \,\, \text{ and } \,\,\int_\Sigma h\, \mathrm{d}\sigma>0.
\end{equation}

  In  \cite[Theorem 5.11.5]{medkova} it  have been shown that,   if $\Omega$ is a bounded domain with a compact boundary of class $C^{1,\lambda}$, with $0<\lambda<1$,  and if  condition \eqref{funzione h} holds, then  the problem \eqref{lab:NeumannProblem} admits a unique solution  which can be represented by means of a single layer potential.
  
Now we are interested to solve the problem
\begin{eqnarray}  \label{lab:RobinProblemBis}
       \left\{
           \begin{array}{ll}
                 u\in\mathcal{D}^{p}, & \\
                  \Delta u =0
                 & \,\, \textrm{in $\Omega$,}
                 \\
                 \dfrac{\partial u}{\partial \nu}+hu=g
                  &\,\,  \textrm{on $\Sigma$,}
             \end{array}
           \right.
     \end{eqnarray}
in the space $\mathcal{D}^p$ defined by \eqref{lab:spaceDp}, where $h$ satisfies the hypothesis \eqref{funzione h}  and $\Omega$ is an ($m+1$)-connected domain as defined in Section \ref{sec:intro}. 

Let $u=D\psi$ be a double layer potential   with density a function $\psi\in W^{1,p}(\Sigma)$  (see definition \eqref{lab:DoublePoten}).
On account of  \eqref{key formula},   the boundary condition $ {\partial u}/{\partial \nu}+hu=g$  can be rewritten on $\Sigma$  as  
\begin{equation*}\label{boundarycondition}
    \int_{\Sigma} \mathrm{d}\psi(y)\wedge \mathrm{d}_x[s_{n-2}(x,y)]+h \left( \frac{1}{2}\psi +D\psi \right)d\sigma =g\,\mathrm{d}\sigma,
\end{equation*}
where we have used the jump relation for the double layer potential $D\psi$. 

Recalling the definition  \eqref{lab:J'} of the operator $J'$, the above equality becomes   
\begin{equation}\label{integral equation}
    J'(\mathrm{d}\psi)+h \left( \frac{1}{2}\psi +D\psi\right)=g\quad \text{ on }\Sigma\,.
\end{equation}
A solution of the problem (\ref{lab:RobinProblem}) exists  if, and only if, the equation \eqref{integral equation}  is solvable.

Assume either $n\geq 3$, or that $n=2$ with  $\Sigma$ not exceptional. Let us write  $\psi\in W^{1,p}(\Sigma)$   as a single layer potential $S\varphi$  with $\varphi\in L^p(\Sigma)$ (see \eqref{lab:SinglePoten}).
Hence, $\mathrm{d}\psi=J\varphi$, where $J$ is given by \eqref{lab:OperatoreJ}, and the first term in \eqref{integral equation} can be rewritten as $J'(\mathrm{d}\psi)=J'J\varphi$.  Therefore,  using \eqref{J'J riduzione}, equation \eqref{integral equation} is equal to the following integral equation of Fredholm type
\begin{equation}\label{integral equation 2}
   -\frac{1}{4} \varphi +K^2 \varphi+h \left( \frac{1}{2}S\varphi +D(S\varphi)\right)=g\quad \text{ on }\Sigma\,,
\end{equation}
where we recall that $K$ is the compact operator defined in \eqref{operatore K}. 
This shows that the operator  on the left hand-side of   \eqref{integral equation}, mapping from $W^{1,p}(\Sigma)$ into $L^p(\Sigma)$, can be reduced on the right.  Then, there exists a solution of \eqref{integral equation} if, and only if,  $g$ satisfies the relevant compatibility conditions.   However, in this case, we can prove the existence of a solution to  \eqref{integral equation} in a way that is probably easier: 
due to Theorem \ref{dirichlet}, any function $\psi\in W^{1,p}(\Sigma)$ can be written as a single layer potential $S\varphi$  with $\varphi\in L^p(\Sigma)$.
Therefore, a solution  $\psi\in W^{1,p}(\Sigma)$ to \eqref{integral equation} exists if, and only if,  a solution $\varphi\in L^p(\Sigma)$ to the equation \eqref{integral equation 2} exists. 
As a consequence,   the Robin problem (\ref{lab:RobinProblemBis}) admits a solution  if, and only if,  the Fredholm equation  \eqref{integral equation 2}  is solvable.
 If we set
\begin{equation}\label{operatore H}
H \varphi=   K^2\varphi+h \left( \frac{1}{2}S\varphi +D(S\varphi) \right)\quad \text{ on }\Sigma\,,
\end{equation}
this happens for any $g\in L^p(\Sigma)$  if, and only if,  $\text{ker}  \left(-\frac{1}{4} I+H \right)=\{0\}$. Hence, let $\varphi\in \text{ker}   \left(-\frac{1}{4} I+H \right)$, that is 
\[
-\frac{1}{4} \varphi + H\varphi=0\,.
\]
By bootstrap techniques, $\varphi$ is a H\"older function
on $\Sigma$ and, as a consequence,  $\psi=S\varphi
\in C^{1,\lambda}(\Sigma)$. This implies  that $ \psi\in W^{1,2}(\Sigma)$, and hence $u=D\psi\in \mathcal{D}^2$. In this case,  as proved in \cite[Remark 1]{CiLeMa2012},  the usual formula
\begin{equation*}\label{lab_Green3}
   \int_\Omega |\nabla u|^2 \, \mathrm{d}x=\int_\Sigma u\frac{\partial u}{\partial \nu}\, \mathrm{d}\sigma
\end{equation*}
holds true, where ${\partial u}/{\partial \nu}$ is given by (\ref{key formula}) . Therefore,
\[
0=\int_\Sigma u\left( \dfrac{\partial u}{\partial \nu}+hu\right)\mathrm{d}\sigma=\int_\Omega|\nabla u|^2 \mathrm{d}x+\int_\Sigma hu^2 \mathrm{d}\sigma\,,
\]
 which implies that $|\nabla u|^2=0$ in $\Omega$ and $hu^2=0$ on $\Sigma$. Then, there exists a constant $c$ such that $u= c$ in $\Omega$ and, from  equality $\int_{\Sigma} hc^2\,\mathrm{d}\sigma=0$, 
 it follows that necessarily $u= 0$, and then $\varphi=0$. 

 Accordingly, we have proven the following result of existence and uniqueness of the solution of the Robin problem \eqref{lab:RobinProblemBis}.

 \begin{theorem} \label{teorema 4}
Assume either $n\geq 3$, or that $n=2$ with  $\Sigma$ not exceptional.  Then, the Robin problem (\ref{lab:RobinProblemBis}) admits a unique solution   for every $g\in L^{p}(\Sigma)$. Moreover,  the double layer potential (\ref{lab:DoublePoten}) is a solution of (\ref{lab:RobinProblemBis}) if, and only if, its density $\psi$  is given by
\begin{equation*}\label{psi semplice strato}
    \psi(x)=\int_{\Sigma} \varphi(y)s(x,y)\,\mathrm{d}\sigma_y,\quad x\in\Sigma,
\end{equation*}
  where $\varphi$ is the solution of the Fredholm equation 
  \begin{equation*}\label{eqphi}
  -\frac{1}{4}\varphi+H\varphi=g\quad \text{ on } \Sigma\,,
  \end{equation*}
  $H$ being the operator (\ref{operatore H}).
\end{theorem}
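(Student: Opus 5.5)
The plan is to reduce everything to the Fredholm equation $-\frac14\varphi+H\varphi=g$ in $L^p(\Sigma)$, prove that its kernel is trivial, and then transport existence and uniqueness back to $u=D\psi$.

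\emph{Step 1 (reduction).} For $\psi\in W^{1,p}(\Sigma)$, the boundary condition for $u=D\psi$ is equivalent, by \eqref{key formula} and the jump relation $u|_\Sigma=\frac12\psi+D\psi$, to \eqref{integral equation}. Under the stated assumptions ($n\ge3$, or $n=2$ with $\Sigma$ not exceptional), Theorem~\ref{dirichlet} gives a representation $\psi=S\varphi$ with $\varphi\in L^p(\Sigma)$. Indeed, the single layer potential solving the Dirichlet problem with datum $\psi$ has trace $\psi$, and no additive constant is needed in the non-exceptional case. Then $\mathrm d\psi=J\varphi$, so $J'(\mathrm d\psi)=J'J\varphi=-\frac14\varphi+K^2\varphi$ by \eqref{J'J riduzione}. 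Hence \eqref{integral equation} is equivalent to \eqref{integral equation 2}, i.e.\ $-\frac14\varphi+H\varphi=g$.

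\emph{Step 2 (compactness).} I would check that $H$ is compact on $L^p(\Sigma)$. $K$ is compact, so $K^2$ is too. The map $\varphi\mapsto S\varphi$ sends $L^p(\Sigma)$ boundedly into $W^{1,p}(\Sigma)$, which embeds compactly into $L^p(\Sigma)$ (and even into a Hölder space). The operator $D$ restricted to $\Sigma$ is bounded on $L^p$, and multiplication by $h\in L^\infty$ is bounded. Composing a compact map with bounded ones shows that $\varphi\mapsto h(\frac12 S\varphi+DS\varphi)$ is compact. By the Fredholm alternative, the equation is solvable for every $g$ exactly when $\ker(-\frac14 I+H)=\{0\}$.

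\emph{Step 3 (trivial kernel, the main step).} Take $\varphi$ in the kernel and set $\psi=S\varphi$, $u=D\psi$. Then $u\in\mathcal D^p$ solves the homogeneous Robin problem. To use Green's identity I need $\psi\in W^{1,2}(\Sigma)$, which is where the regularity must be justified. Write $\varphi=4H\varphi$. Since $S\varphi$ is Hölder continuous, $DS\varphi$ is continuous, and $h\in L^\infty$, the term $h(\frac12S\varphi+DS\varphi)$ lies in $L^\infty$. The term $K^2\varphi$ improves integrability by bootstrap. Together these give $\varphi\in L^\infty\subset L^2$. That suffices: $\psi=S\varphi\in W^{1,2}(\Sigma)$ and $u\in\mathcal D^2$. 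Full Hölder regularity is not needed when $h$ is only bounded. By \cite[Remark 1]{CiLeMa2012},
\[
0=\int_\Sigma u\Bigl(\frac{\partial u}{\partial\nu}+hu\Bigr)\mathrm d\sigma=\int_\Omega|\nabla u|^2\,\mathrm dx+\int_\Sigma hu^2\,\mathrm d\sigma .
\]
Since $h\ge0$, both terms vanish, so $u=c$ is constant, and $c^2\int_\Sigma h\,\mathrm d\sigma=0$ together with \eqref{funzione h} forces $c=0$.

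It remains to show $\varphi=0$. From $u=D\psi\equiv0$ in $\Omega$ we get $\partial u/\partial\nu=0$, so $J'J\varphi=J'\mathrm d\psi=0$. Since $\ker(J'J)=\ker J$, this gives $J\varphi=\mathrm d\psi=0$, hence $\psi$ is constant on each $\Sigma_j$. Two cases then apply:
\begin{itemize}
\item Since $\psi=S\varphi$ is constant on each $\Sigma_j$ and the single layer potential is continuous across $\Sigma$, $S\varphi$ is constant in each $\overline\Omega_j$, $j\ge1$, by uniqueness for the interior Dirichlet problem. It is also constant in $\Omega$, which is the unique solution of the Dirichlet problem of Theorem~\ref{dirichlet} with locally constant data.
\item Alternatively, work from the double layer directly. $D\psi=0$ in $\Omega$ with $\psi$ locally constant forces $\psi$ to be constant overall: the double layer of $\chi_{\Sigma_j}$ equals $\pm1$ or $0$ according to position, and a combination vanishing in $\Omega$ must have all coefficients equal. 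Then the jump relation $u|_\Sigma=\frac12\psi+D\psi=0$ determines this constant, giving $\psi=0$.
\end{itemize}
Hence $S\varphi=0$ on $\Sigma$. The Dirichlet problem with zero datum has, by Theorem~\ref{dirichlet}, the unique solution $0$, and in the non-exceptional case the density representing $0$ is $0$. So $\varphi=0$. I expect this injectivity of $\varphi\mapsto S\varphi$, which uses the non-exceptional hypothesis, together with the regularity bootstrap, to be the delicate points.

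\emph{Step 4 (conclusion).} Fredholm theory gives a unique $\varphi$ for each $g\in L^p(\Sigma)$, and hence a solution $u=D(S\varphi)$. This also yields the stated characterization: $D\psi$ solves the problem iff $\psi=S\varphi$ with $\varphi$ solving the equation.

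For uniqueness of $u$, take two solutions in $\mathcal D^p$. Their densities are of the form $S\varphi$ by Step 1, so their difference corresponds to a $\varphi$ in the kernel, and the difference is $0$ by Step 3.
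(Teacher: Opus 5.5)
Your reduction (Steps 1--2) and the energy argument giving $u=D(S\varphi)\equiv 0$ in $\Omega$ follow the paper's proof. Your remark that, with $h$ only in $L^\infty$, the bootstrap yields $\varphi\in L^\infty$ rather than H\"older continuity is more accurate than the paper's claim, and it still suffices for $S\varphi\in W^{1,2}(\Sigma)$. The genuine gap is the last step, from $u\equiv 0$ in $\Omega$ to $\varphi=0$. This is also exactly the step the paper passes over with ``and then $\varphi=0$''.

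Your second bullet rests on a false claim. On $\Sigma_j$, $j\ge 1$, the normal $\nu$ points into $\Omega_j$, and $y\mapsto s(x,y)$ is harmonic in $\Omega_j$ for $x\in\Omega$. Hence $D\chi_{\Sigma_j}=0$ in $\Omega$, while $D\chi_{\Sigma_0}=1$ there. So $D\bigl(\sum_{j=0}^m c_j\chi_{\Sigma_j}\bigr)=c_0$ in $\Omega$, and $D\psi=0$ only forces $c_0=0$. The jump relation adds nothing, since $\frac12\chi_{\Sigma_j}+D\chi_{\Sigma_j}=0$ on $\Sigma$ for $j\ge1$. Your first bullet fails as well. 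In $\Omega$, $S\varphi$ is the harmonic function equal to $c_j$ on $\Sigma_j$, which is not constant unless the $c_j$ coincide, and in any case nothing forces $S\varphi|_\Sigma=0$.

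This cannot be repaired when $m\ge 1$, because the kernel really is nontrivial. By Theorem \ref{dirichlet}, for $1\le j\le m$ there is $\varphi_j\in L^p(\Sigma)$, $\varphi_j\neq 0$, with $S\varphi_j=\chi_{\Sigma_j}$ on $\Sigma$. Then $J\varphi_j=\mathrm d\chi_{\Sigma_j}=0$, so $-\frac14\varphi_j+K^2\varphi_j=J'J\varphi_j=0$ by (\ref{J'J riduzione}). Also $\frac12 S\varphi_j+D(S\varphi_j)=0$ on $\Sigma$. Thus $\varphi_j\in\ker\bigl(-\frac14 I+H\bigr)$, and this kernel has dimension at least $m$.

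Correspondingly, the existence claim itself fails. By (\ref{key formula}), $\frac{\partial u}{\partial\nu}\,\mathrm d\sigma$ is an exact form on each closed hypersurface $\Sigma_j$. So every $u\in\mathcal D^p$ solving the problem must satisfy $\int_{\Sigma_j}(g-hu)\,\mathrm d\sigma=0$ for $j=0,\dots,m$, which is the analogue of (\ref{media2}). The choice $h=\chi_{\Sigma_0}$, $g=\chi_{\Sigma_1}$ satisfies the hypotheses on $h$ but violates this condition, since $\int_{\Sigma_1}(g-hu)\,\mathrm d\sigma=|\Sigma_1|$.

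Your argument, like the paper's, is correct only for $m=0$. There $\psi$ is a single constant, $D\psi$ equals that constant in $\Omega$ and so forces it to vanish, and injectivity of $S$ gives $\varphi=0$. For $m\ge 1$ only uniqueness of $u$ survives. Existence for every $g$ requires enlarging the class of representations, for instance by adding the single layers $\int_{\Sigma_j}s(x,y)\,\mathrm d\sigma_y$ as the paper does for the Neumann problem in (\ref{sol gen}).
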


We have also
\begin{theorem}
    Suppose $n=2$ and $\Sigma$ is exceptional. Then,   the Robin problem (\ref{lab:RobinProblemBis}) admits a unique solution   for every $g\in L^{p}(\Sigma)$. Moreover,  the double layer potential (\ref{lab:DoublePoten}) is a solution of (\ref{lab:RobinProblemBis}) if, and only if, its density $\psi$  is given by
    \begin{equation*}\label{psi c semplice strato}
    \psi(x)=\int_{\Sigma} \varphi(y)s(x,y)\,\mathrm{d}\sigma_y + c,\quad x\in\Sigma,
\end{equation*}
where $(\varphi,c)\in L^p(\Sigma)\times\mathbb{R}$ is the solution of the Fredholm equation 
 \begin{equation}\label{eqphic}
  -\frac{1}{4}(\varphi,c)+\widetilde{H}(\varphi,c)=(g,0)\quad \text{ on } \Sigma\,,
  \end{equation}
  and
  \begin{equation}\label{eq:defHphic}
\widetilde{H}(\varphi,c)= \left(H\varphi+hc,  \int_{\Sigma}\varphi\, ds + \frac{c}{4}\right).
\end{equation}
\end{theorem}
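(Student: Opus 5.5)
We follow the scheme used for Theorem \ref{teorema 4}, with the single layer representation (\ref{lab:SinglePotenBis}) in place of (\ref{lab:SinglePoten}). Since $\Sigma$ is exceptional, Theorem \ref{dirichlet} (read with the convention (\ref{lab:SinglePotenBis})) tells us that every $\psi\in W^{1,p}(\Sigma)$ can be written as $\psi=S\varphi+c$ with $(\varphi,c)\in L^p(\Sigma)\times\mathbb{R}$. The constant does not change the differential, so $\mathrm{d}\psi=J\varphi$. The double layer potential of a constant is constant in $\Omega$, so $\frac12 c+D c$ reduces to $c$ on $\Sigma$; in our normalization $D1=\frac12$, so that $\frac12 c + Dc = c$ as a trace (to be checked against the sign conventions for $\Omega$ multiply connected). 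Substituting into (\ref{integral equation}) and using (\ref{J'J riduzione}) turns the boundary condition into
\[
-\tfrac14\varphi+H\varphi+hc=g \quad\text{on }\Sigma .
\]

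This is one equation for two unknowns. We close the system with the second component of (\ref{eqphic}), namely $\int_\Sigma\varphi\,ds=0$; the terms $\pm c/4$ cancel, leaving exactly that condition. The choice is natural: on an exceptional curve the pair $(\varphi,c)$ representing a given $\psi$ is not unique, because some nonzero density $\varphi_0$ with $\int_\Sigma\varphi_0\,ds\neq0$ has $S\varphi_0$ constant on $\Sigma$. Imposing zero mean removes this ambiguity. We then need to show that every $\psi\in W^{1,p}(\Sigma)$ has a representation $S\varphi+c$ with $\int_\Sigma\varphi=0$. For this, take any representation and subtract a multiple of $\varphi_0$, absorbing the resulting constant into $c$. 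Here we use that $\varphi_0$ exists and spans the relevant kernel, as in \cite{CiLeMa2012}. It follows that (\ref{lab:RobinProblemBis}) is solvable if and only if (\ref{eqphic}) is. Moreover, $\widetilde H$ is compact on $L^p(\Sigma)\times\mathbb{R}$: its first component is $H$ plus a finite rank term, and its second component is finite rank. Hence (\ref{eqphic}) is a Fredholm equation of index zero, and existence for all $g$ follows from triviality of the kernel.

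For the kernel, let $(\varphi,c)$ solve the homogeneous system. Then $\int_\Sigma\varphi=0$, and $\psi=S\varphi+c$ gives a $u=D\psi$ satisfying the homogeneous Robin problem. By bootstrapping, exactly as before, $\varphi$ is H\"older and $\psi\in C^{1,\lambda}(\Sigma)\subset W^{1,2}(\Sigma)$. The energy identity from \cite[Remark 1]{CiLeMa2012} combined with (\ref{funzione h}) then gives $u=0$ in $\Omega$. From $u=0$ we get $\partial u/\partial\nu=0$, so by (\ref{key formula}) $\mathrm{d}\psi$ is closed-trivial. We must go further and conclude $\psi=0$ on $\Sigma$, and then $\varphi=0$ and $c=0$. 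Here the main obstacle is the passage from $u=D\psi=0$ in $\Omega$ back to the density, because the double layer potential in multiply connected domains has a kernel: $D\psi$ vanishes when $\psi$ is locally constant with suitable values on the components $\Sigma_j$.

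Our plan for this step is as follows. First, use $u=0$ together with the trace $\frac12\psi+D\psi=0$ to obtain $\psi=-2D\psi|_\Sigma$. Then use $\mathrm{d}\psi=J\varphi=0$, which says the single layer $S\varphi$ is constant on each $\Sigma_j$. By the uniqueness part of Theorem \ref{dirichlet} (for $\mathcal{S}^p$ with this data), $S\varphi$ is locally constant on $\Sigma$ and harmonic, and combining this with $\int_\Sigma\varphi=0$ we would deduce $\varphi\in\mathcal V$. We would then show that the values $c_j$ of $\psi$ on the $\Sigma_j$ vanish by evaluating $hu=0$ on $\Sigma$ together with the jump relation. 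Finally, $\varphi=0$ follows from the injectivity of $S$ on zero-mean densities with $S\varphi$ locally constant. This last injectivity argument is where we expect to rely most heavily on the structure results (the space $\mathcal V$ and the functions $\Psi_h$) from \cite{CiLeMa2012}.

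Uniqueness of the Robin solution itself follows from the same energy argument. The "if and only if" characterization of the density follows from the equivalence established above.
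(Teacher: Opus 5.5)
Your reduction to \eqref{eqphic} is the paper's: $\psi=S\varphi+c$, $\frac12c+Dc=c$, the zero-mean normalization, and compactness of $\widetilde H$. Your remark that every $\psi$ admits a zero-mean representation also fills in a point the paper leaves implicit: subtract a multiple of the equilibrium density $\varphi_0$, for which $S\varphi_0=0$ on $\Sigma$.

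\textbf{The gap is in the kernel step, and it cannot be closed when $m\geq1$.} You plan to show that the values $c_j$ of $\psi$ on $\Sigma_j$ vanish ``by evaluating $hu=0$ on $\Sigma$ together with the jump relation''. This fails. A locally constant density vanishing on $\Sigma_0$ gives $u=D\psi\equiv0$ in $\Omega$, because for $x\in\Omega$ and $k\geq1$ one has $\int_{\Sigma_k}\partial_{\nu_y}s(x,y)\,d\sigma_y=0$, $s(x,\cdot)$ being harmonic in $\Omega_k$. So $hu=0$ carries no information about the $c_j$.

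Concretely, take $\Psi_h\in\mathcal{V}$, $h=1,\dots,m$, from Section~3.
\begin{itemize}
\item Since $K\Psi_h=-\frac12\Psi_h$, we get $-\frac14\Psi_h+K^2\Psi_h=0$.
\item Integrating $\frac12\Psi_h+K\Psi_h=0$ over $\Sigma$, and using $\int_\Sigma\partial_{\nu_x}s(x,y)\,d\sigma_x=\frac12$ for $y\in\Sigma$, gives $\int_\Sigma\Psi_h\,ds=0$.
\item $S\Psi_h$ equals $0$ on $\Sigma_0$ and $\delta_{hk}$ on $\Sigma_k$. Hence $\frac12S\Psi_h+D(S\Psi_h)$, the interior trace of $D(S\Psi_h)\equiv0$, vanishes.
\end{itemize}
Thus $(\Psi_h,0)\neq(0,0)$ solves the homogeneous system \eqref{eq:system}, and the operator in \eqref{eqphic} has a kernel of dimension at least $m$. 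Your argument up to $\varphi\in\mathcal{V}$, $c=0$ is fine; it identifies exactly this kernel.

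The paper's proof skips the same point you flagged. The implication ``$D(S\varphi+c)=0$ in $\Omega$, hence $S\varphi+c=0$ on $\Sigma$'' is valid only for $m=0$. In that case $D\psi$ outside $\overline{\Omega}$ has zero Neumann data and decays, so it vanishes, and the jump relation gives $\psi=0$. After that, $c=0$ by exceptionality and $\varphi=0$ by zero mean, in both your argument and the paper's.

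\textbf{For $m\geq1$ the statement itself fails.} By \eqref{key formula}, $\frac{\partial u}{\partial\nu}\,d\sigma$ is an exact form on $\Sigma$ when $u=D\psi$. So $\int_{\Sigma_j}\frac{\partial u}{\partial\nu}\,d\sigma=0$ for every $j$, as in Theorem~\ref{thm solvability 1}. Any solution of \eqref{lab:RobinProblemBis} must therefore satisfy $\int_{\Sigma_j}(g-hu)\,d\sigma=0$ for $j=0,\dots,m$.

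For instance, take $\{1/2<|x|<1\}$, whose outer boundary is the exceptional unit circle, with $h\equiv1$, $g=1$ on $|x|=1$ and $g=0$ on $|x|=1/2$. Zero flux makes the circular mean of $u$ independent of the radius. But the Robin condition, integrated over the two circles, forces these means to be $1$ and $0$.

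So your diagnosis of the obstacle is correct, but no argument can remove it. For $m\geq1$ the result needs $m$ compatibility conditions on $g$, or an enlarged ansatz with terms $\sum_{j=1}^m c_j\int_{\Sigma_j}s(x,y)\,d\sigma_y$ as in the Neumann case.
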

\begin{proof}
The proof is similar to the previos one. As before, a solution of the problem (\ref{lab:RobinProblemBis}) exists  if, and only if, the equation \eqref{integral equation}  is solvable. Now, we cannot write a function $\psi\in W^{1,p}(\Sigma)$ as
a single layer potential $S\varphi$, but we have to write
\begin{equation}\label{eq:modified}
\psi = S\varphi + c
\end{equation}
(see \eqref{lab:SinglePotenBis} and Theorem \ref{dirichlet}).
We note that, if we require
\begin{equation}\label{eq:condort0}
\int_{\Sigma} \varphi\, ds = 0\,,
\end{equation}
then $(\varphi,c)$ is uniquely determined. In fact, suppose that
$$
S\varphi_1 + c_1 = S\varphi_2 + c_2 \quad \text{on } \Sigma.
$$
This means $S(\varphi_1-\varphi_2)=c_1-c_2$ on $\Sigma$ and, since $\Sigma$ is exceptional, we must have
$c_1-c_2=0$, and then
$$
S(\varphi_1-\varphi_2)=0 \quad \text{on } \Sigma.
$$
This leads to $S(\varphi_1-\varphi_2)=0$ in $\Omega$ and then, thanks to condition \eqref{eq:condort0}, 
$S(\varphi_1-\varphi_2)=0$ also in $\mathbb{R}^{2}\setminus \overline{\Omega}$. By using the jump relations for
the normal derivative of a single layer potential, we find $\varphi_1-\varphi_2=0$ on $\Sigma$.

Coming back to equation  \eqref{integral equation},  since we have $\mathrm{d}\psi= \mathrm{d}(S\varphi + c)=\mathrm{d} (S\varphi)$, it becomes

$$
   -\frac{1}{4} \varphi +K^2\varphi+h \left( \frac{1}{2}S\varphi +D(S\varphi) \right) + h\left(
   \frac{c}{2} + Dc\right) =g \quad \text{ on }\Sigma\,,
$$
i.e.
\begin{equation}\label{integral equation 3}
   -\frac{1}{4} \varphi + H\varphi+ h\, c =g \quad \text{ on }\Sigma.
\end{equation}

We now prove that equation \eqref{integral equation 3}  has one and only one solution $(\varphi,c)\in L^p(\Sigma)\times \mathbb{R}$, with  
condition \eqref{eq:condort0}.  Let us consider the Fredholm equation
\begin{equation}\label{eq:fredh2}
- \frac{1}{4}(\varphi,c) + \widetilde{H}(\varphi,c) = (g,0)
\end{equation}
and let us prove that its kernel is trivial.  We note that $(\varphi,c)$ is an eigensolution of this equation if, and only if, 
\begin{equation}\label{eq:system}
\begin{cases}
-\dfrac{1}{4} \varphi + H\varphi+ h\, c =0 ,  \\[0.2cm]
\displaystyle\int_{\Sigma} \varphi\, ds =0\, .
\end{cases}
\end{equation}

From the first equation,  setting $u=D(S\varphi+c)= D(S\varphi) + c$ in $\Omega$ and, reasoning as in the proof of Theorem \ref{teorema 4},
we find $u=0$ in $\Omega$. This means $D(S\varphi + c) =0$ and then $S(\varphi) + c=0$ on $\Sigma$. But $\Sigma$
being exceptional, we must have $c=0$. The second equation in \eqref{eq:system} implies $\varphi=0$.
Therefore,  Fredholm equation \eqref{eq:fredh2} has one and only one solution for any $g\in L^p(\Sigma)$. But this is equivalent
to say that  equation \eqref{integral equation 3} is satisfied and \eqref{eq:condort0} holds. 
This completes the proof of the theorem.
\end{proof}

\begin{acknowledgement}
 The authors are members of the “Gruppo Nazionale per l’Analisi Matematica, la Probabilità e le loro Applicazioni” (GNAMPA) of the “Istituto Nazionale di Alta Matematica” (INdAM).
The authors acknowledge the support of the Project funded by the European Union - Next Generation EU under the National Recovery and Resilience Plan (NRRP), Mission 4 Component 2 Investment 1.1 - Call for tender PRIN 2022 No. 104 of February, 2 2022 of Italian Ministry of University and Research; Project code: 2022SENJZ3 (subject area: PE - Physical Sciences and Engineering) “Perturbation problems and asymptotics for elliptic differential equations: variational and potential theoretic methods”.
\end{acknowledgement}
\ethics{Competing Interests}{\newline
This study was funded by European Union - Next Generation EU under the  the National Recovery and Resilience Plan (NRRP), Mission 4 Component 2 Investment 1.1. Project number 2022SENJZ3.\newline
The authors have no conflicts of interest to declare that are relevant to the content of this chapter.}

\input{references}

\end{document}

%% file: references.tex
%
%
%

%% file: CialdeaLeonessa.bbl
\begin{thebibliography}{99.}%
%
%

\bibitem{cialdea} Cialdea A.: On oblique derivate problem for Laplace equation and connected topics. 
Rend. Accad. Naz. Sci. XL Mem. Mat. (5) \textbf{12},  no. 1, 181--200 (1988)

 
\bibitem{cialdeamlp} Cialdea A. The multiple layer potential for the biharmonic equation in $n$ variables.
Atti Accad. Naz. Lincei Cl. Sci. Fis. Mat. Natur. Rend.Lincei (9) Mat. Appl. \textbf{3} , no. 4, 241--259 (1992) 


\bibitem{ACCialdeanew}  Cialdea  A.:  The Simple-Layer Potential Approach to the Dirichlet Problem: An Extension to Higher Dimensions of Muskhelishvili Method and Applications.  In:   C. Constanda,  M. Dalla Riva, P.D. Lamberti, P. Musolino (Eds.)  Integral Methods in Science and Engineering, Volume 1, pp. 59--69 , Birkh\"auser
(2017)
 
\bibitem{CialdeaHsiao1995}  Cialdea A.,   Hsiao G.C.:  
Regularization for some boundary integral equations of the first kind in Mechanics.
Rend. Accad. Naz. Sci. XL, Serie 5, Mem. Mat. Parte I.  \textbf{XIX},   25–42  (1995)

\bibitem{CiLeMa2012}  Cialdea A.,  Leonessa V.,  Malaspina A.:  On the Dirichlet and the Neumann problems for Laplace equation in multiply connected domains.
 Complex Var. Elliptic Equ. \textbf{57}, no. 10, 1035-- 1054 (2012)  
 
 
  \bibitem{Cialema2011}  Cialdea A.,  Leonessa V.,  Malaspina A.:   Integral representations for solutions of some BVPs for the Lamé system in
multiply connected domains. Bound. Value Probl. \textbf{2011}, Article ID: 53 (2011)

  \bibitem{ACCiLeMa4}   Cialdea A.,   Leonessa V.,   Malaspina A.:  The Dirichlet problem for second-order divergence form elliptic operators with variable coefficients: the
simple layer potential ansatz. 
 Abstr. Appl. Anal. \textbf{2015}, Art. ID 276810, 11 pages (2015)

\bibitem{cialema2023} 
Cialdea A.,   Leonessa V.,   Malaspina A.:  On the traction problem for steady elastic oscillations equations: the double
layer potential ansatz. Rend. Circ. Mat. Palermo, II. Ser., \textbf{72}, no. 3, 1947--1960  (2023)

\bibitem{fichera}  Fichera G.: Una introduzione alla teoria delle equazioni integrali singolari.
   Rend. Mat. Roma 5 \textbf{17}, 82--191 (1958)


\bibitem{forme}  Fichera G.:  Spazi lineari di $k$-misure e di forme differenziali.   Proc. of Intern. Symposium on Linear Spaces,
Jerusalem 1960, Israel Ac. of Sciences and Humanities, Pergamon
Press,  175--226 (1961)



 
\bibitem{flanders}  Flanders H. : Differential Forms with Applications to the Physical Sciences.  Academic Press, New York, San Francisco, London (1963)

\bibitem{folland}  Folland G.B.:  An introduction to partial differential equations.   Princeton University Press (1995)

\bibitem{hodge}  Hodge W.V.:  A Dirichlet problem for harmonic functionals with applications to analytic varieties.  Proc. of the London
        Math. Soc. 2 \textbf{36},  257--303 (1934) 
        
\bibitem{HsiaoWen}  Hsiao G.C. ,  Wendland W.L.:  Boundary integral equations.   AMS 164, Springer (2008)

\bibitem{medkova} Medkov\'a D.:  The Laplace equation. Boundary value problems on bounded and unbounded Lipschitz domains.
Springer, Cham (2018)

\bibitem{mikhlin}  Mikhlin S.G., Pr\"ossdorf  S.:    Singular integral operators. Springer-Verlag, Berlin (1986)





 \end{thebibliography}
